\newcommand{\bbR}{{\mathbb{R}}}
\newcommand{\bbZ}{{\mathbb{Z}}}
\newcommand{\bbT}{{\mathbb{T}}}
\newcommand{\lb}{\label}
\newcommand{\beq}{\begin{equation}}
\newcommand{\eeq}{\end{equation}}
\newcommand{\ba}{\begin{align}}
\newcommand{\ea}{\end{align}}
\newcommand{\eps}{\varepsilon}
\newcounter{smalllist}
\DeclareMathOperator*{\Lip}{Lip} 
\allowdisplaybreaks \numberwithin{equation}{section}
\newtheorem{theorem}{Theorem}[section]
\newtheorem{lemma}[theorem]{Lemma}
\newtheorem{corollary}[theorem]{Corollary}
\theoremstyle{definition}
\newtheorem{definition}[theorem]{Definition}
\newtheorem{example}[theorem]{Example}
\theoremstyle{remark}
\begin{document}

\title[Relaxation Enhancement]
{Relaxation Enhancement by Time-Periodic Flows}

\author{Alexander Kiselev}
\thanks{Department of
Mathematics, University of Wisconsin, Madison, WI 53706; e-mail:
kiselev@math.wisc.edu}
\author{Roman Shterenberg}
\thanks{\text{Department of
Mathematics, University of Wisconsin, Madison, WI 53706; e-mail:
shterenb@math.wisc.edu}}
\author{Andrej Zlato\v s}
\thanks{Department of
Mathematics, University of Chicago, Chicago, IL 60637; email:
zlatos@math.uchicago.edu}

\begin{abstract}
We study enhancement of diffusive mixing by fast incompressible
time-periodic flows. The class of \it relaxation-enhancing \rm flows
that are especially efficient in speeding up mixing has been
introduced in \cite{CKRZ}. The relaxation-enhancing property of a
flow has been shown to be intimately related to the properties of
the dynamical system it generates. In particular, time-independent
flows $u$ such that the operator $u \cdot \nabla$ has sufficiently
smooth eigenfunctions are not relaxation-enhancing. Here we extend
results of \cite{CKRZ} to time-periodic flows $u(x,t)$ and in
particular show that there exist flows such that for each fixed time
the flow is Hamiltonian, but the
resulting time-dependent flow is relaxation-enhancing. Thus we
confirm the physical intuition that time dependence of a flow may
aid mixing. We also provide an extension of our results
to the case of a nonlinear
diffusion model. The proofs are based on a general criterion for the
decay of a semigroup generated by an operator of the form
$\Gamma+iAL(t)$ with a negative unbounded self-adjoint operator
$\Gamma$, a time-periodic self-adjoint operator-valued function
$L(t)$, and a parameter $A\gg 1$.
\end{abstract}

\maketitle

%%%%%%%%%%%%%%%%%%%%%%%%%%%%%%%%%%%%%%%%%%%%%%%%%%%%%%%%%%%%%%%%%%%%
\section{Introduction}
%%%%%%%%%%%%%%%%%%%%%%%%%%%%%%%%%%%%%%%%%%%%%%%%%%%%%%%%%%%%%%%%%%%%

In the present paper we study enhancement of diffusive mixing by
fast incompressible time-periodic flows. We let $u$ be a
time-periodic incompressible (i.e., $\nabla\cdot u=0$) Lipschitz
vector field (flow) on a smooth compact Riemannian manifold $M$,
or on a bounded domain $M\subset\bbR^n$ with $\partial M \in C^2$.
In the latter case we also require $u(x,t)\cdot \hat{n}=0$ for
$(x,t)\in\partial M\times\bbR$. We consider the PDE
\begin{equation}\label{1.0}
\frac{d}{dt}\phi^A(x,t)+Au(x,At)\cdot
\nabla\phi^A(x,t)=\Delta\phi^A(x,t),\qquad \phi^A(x,0)=\phi_0(x)
\end{equation}
on $M$, with Neumann boundary conditions on $\partial M$ if $M$ is a
bounded domain in $\bbR^n$. Here $\Delta$ is the Laplace-Beltrami
operator on $M$ and $\nabla$ is the covariant derivative. We are
interested in the case of fast flows with $A\gg 1$. Note that the
choice of the term $Au(x,At)$ is natural here because all these
flows have the same streamlines --- solutions of $\tfrac d{d t}
X(x,t)=Au(X(x,t),At)$, $X(x,0)=x$, have the same trajectories
for different $A$ but traverse them at different speeds (proportional to $A$).

It is well known that as time tends to infinity, the solution
$\phi^A(x,t)$ tends to its average
\[
\bar{\phi} \equiv \frac{1}{|M|}\int\limits_M \phi^A(x,t)\,d\mu=
\frac{1}{|M|}\int\limits_M \phi_0(x)\,d\mu=\bar\phi_0,
\]
with $|M|$ the volume of $M$ and $\mu$ the volume measure. We would
like to understand how the speed of relaxation to the average
depends on the properties of the flow and determine which flows are
efficient in enhancing this process.

The question of the influence of advection on diffusion is very
natural and physically relevant, and the subject has a long history.
We refer to the recent paper \cite{CKRZ} for a more detailed
overview of the relevant literature. In \cite{CKRZ}, a class of \it
relaxation-enhancing \rm time-independent flows has been introduced,
and a sharp characterization of such flows has been obtained. Our main
goal here is to generalize the results of \cite{CKRZ} to allow
periodic time dependence, and also to provide some interesting
examples. Let us recall the definition of these flows from
\cite{CKRZ}, adjusted to our setting.

\begin{definition} \label{relaxdef}
We say that the incompressible time-periodic flow $u\in
\Lip(M\times\bbR)$ is {\it relaxation-enhancing} if for any
$\tau,\delta>0$ there is $A_0>0$ such that for any $A>A_0$ and any
initial datum $\phi_0\in L^2(M)$ with $\|\phi_0\|_{L^2(M)}=1$, the
solution $\phi^A(x,t)$ satisfies
\begin{equation} \label{1.0a}
\|\phi^A(\cdot,\tau)-\bar\phi_0\|_{L^2(M)}<\delta.
\end{equation}
\end{definition}

{\it Remark.} We note that just as in \cite{CKRZ},
$\|\phi_0\|_{L^2(M)}=1$ can be replaced by $\|\phi_0\|_{L^p(M)}=1$
and the $L^2(M)$-norm in \eqref{1.0a} by the $L^q(M)$-norm (with any
$p,q\in[1,\infty]$) without a change to the class of
relaxation-enhancing flows.
\smallskip

The flow $u$ defines a unitary evolution $\{U(t)\}_{t\in\bbR}$ on
$L^2(M)$ such that for any $\psi\in L^2(M)$,
\begin{equation} \lb{1.0b}
(U(t)\psi)(X(x,t)) \equiv \psi(x)
\end{equation}
with $X(x,t)$ the unique solution to the ODE
\begin{equation} \lb{1.0c}
\frac d{d t} X(x,t)=u(X(x,t),t), \qquad X(x,0)=x.
\end{equation}
That is,
\begin{equation} \lb{1.0d}
\frac d{dt}(U(t)\psi)+u\cdot\nabla(U(t)\psi)=0.
\end{equation}
We also let $U(t,s)\equiv U(t)U(s)^*$ so that $(U(t,s)\psi)(X(x,t))
\equiv \psi(X(x,s))$. Unitarity of the group
$\{U(t,s)\}_{s,t\in\bbR}$ is implied by incompressibility of $u$
which guarantees that $X(\cdot,t)$ is area-preserving. We note that
if $u(x,t)=u(x)$ is time independent, then
$U(t,s)=e^{(-u\cdot\nabla)(t-s)}$.

The main result of this paper is

\begin{theorem}\label{fluid}
Let $M$ be a smooth compact Riemannian manifold. A time p-periodic
incompressible flow $u \in \Lip(M\times\bbR)$ is relaxation
enhancing if and only if the period operator $U(p)$ has no
eigenfunctions in $H^1(M)$ other than the constant function.
\end{theorem}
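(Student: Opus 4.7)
My plan is to view the theorem as an application of an abstract decay criterion for semigroups of the form $e^{s(\eps\Gamma+iL(s))}$ with $L$ time-periodic, as advertised in the abstract. After the time-rescaling $\psi(x,s):=\phi^A(x,s/A)$ with $\eps:=1/A$, equation \eqref{1.0} becomes
\[
\partial_s\psi = \eps\,\Delta\psi - u(\cdot,s)\cdot\nabla\psi,
\]
which has the form $\partial_s\psi=\eps\Gamma\psi+iL(s)\psi$ with $\Gamma=\Delta\le 0$ self-adjoint and $L(s):=-iu(\cdot,s)\cdot\nabla$ self-adjoint and $p$-periodic. Let $B_\eps$ denote the propagator over one period; by $p$-periodicity the propagator over $[0,Np]$ coincides with $B_\eps^N$, and in the limit $\eps\to 0$ we have $B_0=U(p)$. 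On the closed invariant subspace $\mathcal{H}_0:=\{1\}^\perp$, the required bound \eqref{1.0a} at time $\tau$ is equivalent to $\|B_\eps^{\lfloor\tau/(\eps p)\rfloor}|_{\mathcal{H}_0}\|<\delta$, so the theorem reduces to showing that the long-time behavior of the diffusive iterates $B_\eps^N$ for $N\sim 1/\eps$ is governed by the spectral type of the Floquet operator $U(p)$ on $\mathcal{H}_0$.

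For the necessity direction, suppose $U(p)\psi=e^{i\theta}\psi$ for some $\psi\in H^1\cap\mathcal{H}_0$ with $\|\psi\|=1$, and take $\phi_0=\psi$. A Duhamel comparison with the unitary transport $V(s)$, combined with propagation of $H^1$-bounds under the Lipschitz flow and the energy identity $\tfrac{d}{ds}\|\psi(s)\|^2=-2\eps\|\nabla\psi(s)\|^2$, yields $\|B_\eps\psi-U(p)\psi\|_{L^2}=O(\sqrt{\eps})$ together with $\|B_\eps^n\psi\|_{H^1}\le C$ uniformly for $n\eps\lesssim\tau$. Inserting these bounds into the energy identity over $[0,Np]$ gives $\|B_\eps^N\psi\|^2\ge 1-C\tau$, so for small $\tau$ the norm stays above $1/2$ independently of $\eps$, contradicting \eqref{1.0a}.

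Sufficiency is the main work, and I would prove it via a per-period dichotomy running inside the abstract framework. Set $\phi_n:=B_\eps^n\phi$ and $Q_n:=\int_0^p\|(-\Gamma)^{1/2}W_\eps(np+s,np)\phi_n\|^2\,ds$, so that the energy identity gives $\|\phi_{n+1}\|^2=\|\phi_n\|^2-2\eps Q_n$. If $Q_n\ge M$, the step dissipates a controlled amount of mass; if $Q_n<M$, a Duhamel estimate yields $\|\phi_{n+1}-U(p)\phi_n\|_{L^2}=O(\sqrt{\eps M})$, so the $L^2$-iterates $\phi_n$ track the unitary orbit $U(p)^n\phi_0$ up to small error. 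The spectral hypothesis is then fed into RAGE applied to the discrete unitary $U(p)$: with the finite-rank spectral projection $K_\lambda:=\chi_{[0,\lambda]}(-\Gamma)$, the assumption that $U(p)$ has no non-constant $H^1$-eigenfunctions implies that for every $R>0$ the Ces\`aro mean $N^{-1}\sum_{n=0}^{N-1}\|K_\lambda U(p)^n\phi\|^2$ can be made arbitrarily small, uniformly over $\phi\in\mathcal{H}_0$ with $\|\phi\|\le 1$ and $\|\phi\|_{H^1}\le R$, by choosing $\lambda$ and $N$ large. Consequently, $(I-K_\lambda)U(p)^n\phi$ carries a definite fraction of the mass on a positive-density set of indices $n$, forcing $\|(-\Gamma)^{1/2}U(p)^n\phi\|^2\ge\lambda\|(I-K_\lambda)U(p)^n\phi\|^2$ to be large; via the Duhamel approximation this returns us to the large-$Q_n$ alternative and yields the dissipative gain. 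Summing these gains over the $N\sim 1/\eps$ periods contained in $[0,\tau/\eps]$ produces the required $L^2$-decay below $\delta$.

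The main obstacle, and where the argument truly departs from \cite{CKRZ}, is the discrete-time RAGE step for the Floquet operator $U(p)$. Unlike in the static case, $U(p)$ is not the exponential of a single self-adjoint generator whose spectral subspaces can be manipulated directly, so its spectral decomposition must be accessed through the discrete monodromy, and one must show that the $H^1$-eigenfunction hypothesis translates into a \emph{uniform} Ces\`aro bound over bounded $H^1$-balls of $\mathcal{H}_0$ rather than a pointwise statement for a single vector. Combined with the bookkeeping required to prevent the accumulated $O(\sqrt{\eps})$ Duhamel errors from swamping the per-period decay across $\sim 1/\eps$ iterations, this uniform discrete RAGE is where the bulk of the technical effort will lie.
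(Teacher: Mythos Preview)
Your overall architecture---rescaling to small diffusion $\eps=1/A$, the energy identity, a dichotomy on the size of $\|\phi^\eps\|_1$, Duhamel comparison with the free evolution, and a RAGE-type input for the monodromy $V=U(p)$---matches the paper's proof (carried out for the abstract Theorem~\ref{main}). The paper likewise reduces its continuous-time RAGE lemma to discrete iterates of $V$, so your focus on the Floquet operator is on target.

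The gap is in the RAGE step itself. Your claim that $N^{-1}\sum_{n<N}\|K_\lambda V^n\phi\|^2$ can be made small uniformly over an $H^1$-ball by taking $\lambda$ and $N$ large is \emph{false} when $V$ has point spectrum, regardless of the roughness of its eigenfunctions. If $V\psi=e^{i\theta}\psi$ with $\psi\notin H^1$, the point-spectral contribution to that Ces\`aro mean is $|\langle\phi,\psi\rangle|^2\|K_\lambda\psi\|^2$, which tends to $|\langle\phi,\psi\rangle|^2>0$ as $\lambda\to\infty$; truncations of $\psi$ furnish $\phi$ in any prescribed $H^1$-ball with $|\langle\phi,\psi\rangle|$ bounded away from zero. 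Thus your route ``$K_\lambda V^n\phi$ small $\Rightarrow$ high-frequency mass $\Rightarrow$ large $H^1$-norm'' collapses on $P_p\phi$. The paper fixes this by splitting $\phi=P_c\phi+P_p\phi$: standard RAGE handles $P_c$ (Corollary~\ref{scont}), while on $P_p$ one shows \emph{directly} that the time-average of $\|P_N U(t)P_p\phi\|_1^2$ is large (Lemma~\ref{point}), using that $U(t)Q_j\phi\notin H^1$ whenever $Q_j\phi\neq 0$---this is where the no-$H^1$-eigenfunction hypothesis actually enters, and it does not reduce to a single Ces\`aro-smallness statement. A smaller point: the paper avoids accumulating $O(\sqrt\eps)$ errors over $\sim 1/\eps$ periods by restarting the free comparison at each time $\tau_0$ where $\|\phi^\eps\|_1$ first drops and running for a fixed, $\eps$-independent length $\tau_1$; your per-period tracking would need an analogous restart to keep the errors from swamping the decay.
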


{\it Remark.} 1. When $u$ is time-independent, then this is the
main result of \cite{CKRZ} (and $U(p)$ can be replaced by
$u\cdot\nabla$ in the statement of the theorem).
\smallskip

2. In the case of time-independent $u$ and $M$ a bounded domain with
Dirichlet boundary conditions, a necessary and sufficient condition
for $u$ to be relaxation-enhancing has been derived earlier in
\cite{BHN} by methods different from \cite{CKRZ} and this paper. In
particular, \cite{BHN} provides estimates on the principal
eigenvalue of the operator $-\Delta +A u\cdot \nabla$ and ties the
behavior of this eigenvalue with short-time evolution corresponding
to \eqref{1.0}. Such a link is currently not available in the case
of compact manifolds or Neumann boundary conditions.
\smallskip

We will now discuss an example showing how important time dependence
of the flow can be for relaxation enhancement. It is an example of a
relaxation-enhancing time-periodic flow that, frozen at each
instance of time, has closed streamlines and is not
relaxation-enhancing as a stationary flow. This shows that
relaxation enhancement can be achieved by flows of relatively simple
structure if time dependence is allowed. This contrasts with the
time independent case, where relaxation-enhancing flows must be
quite complex (which is necessary to ensure purely continuous
spectrum or only rough eigenfunctions of $u \cdot \nabla$).

We call a time-independent flow $u$ on $M=\bbT^{2n}$ {\it
Hamiltonian} if there is a $C^1$-function $H:M\to \alpha\bbT$ (for
some $\alpha>0$ and $\alpha\bbT\equiv[0,\alpha]$ with ends
identified) or $H:M\to \bbR$ such that
$u(x)=(-H_{x_{n+1}}(x),\dots,-H_{x_{2n}}(x),H_{x_1}(x),\dots,H_{x_n}(x))$.
For instance, the flow $u(x)\equiv(0,2)$ on $\bbT^2$ corresponds to
the $2\bbT$-valued Hamiltonian $H(x)=2x_1$. It is easy to see from
Theorem~\ref{fluid} that no stationary Hamiltonian flow can be
relaxation-enhancing. Indeed, any $\psi(x)\equiv\omega(H(x))$ with
$\omega$ a smooth $\alpha$-periodic function is an $H^1(M)$
eigenfunction of $u\cdot\nabla$.  A part of our motivation was the
question of existence of {\it time-periodic} Hamiltonian
relaxation-enhancing flows which we now answer in the affirmative by
providing the following example on the two-dimensional torus. We
note that a stationary incompressible flow on $\bbT^2$ is
Hamiltonian (and has closed streamlines) if and only if its mean
$(\bar u_1,\bar u_2)\equiv \int_{\bbT^2} u(x) dx$ has rationally
dependent coordinates. That is, $\bar u_1$ and $\bar u_2$ are
integer multiples of the same number $\alpha>0$, in which case the
function $H:\bbT^2\to\alpha\bbT$,
\[
H(x_1,x_2)\equiv \int_0^{x_1} u_2(y,0)\,dy - \int_0^{x_2}
u_1(x_1,y)\,dy,
\]
is a Hamiltonian for $u$. Notice that $H\in C^1(\bbT^2;\alpha\bbT)$
because $\int_0^1 u_1(x_1,y)dy=\bar u_1$ and $\int_0^1
u_2(y,x_2)dy=\bar u_2$ for any $x_1,x_2$ due to incompressibility of
$u$, and that a real-valued Hamiltonian exists for $u$ only if $(\bar
u_1,\bar u_2)=(0,0)$.

\begin{example} \label{hamiltonian}
Let $v\in\Lip(\bbT^2)$ be any stationary smooth incompressible
relaxation-enhancing flow, for instance, a flow with a purely
continuous spectrum (see, e.g., \cite{Fayad1,Fayad2}). If $(\bar
v_1,\bar v_2)\equiv \int_{\bbT^2} v(x) dx$ is its mean, then $\bar
v_1,\bar v_2\neq 0$ because $v$ cannot be Hamiltonian. Let $b\equiv
(\bar v_1,0)$ and consider the time-$\bar v_1^{-1}$-periodic flow
$u(x,t)\equiv v(x+bt)-b$. For any fixed time $t$ the flow $u(x,t)$
has mean $(0,\bar v_2)$ and hence is Hamiltonian.

If now $X'(t)=u(X(t),t)$ and $Y'(t)=v(Y(t))$ with any
$X(0)=Y(0)=x\in\bbT^2$, then $Y(t)=X(t)+bt$. This means that $X(\bar
v_1^{-1})=Y(\bar v_1^{-1})$, and so the period operator $U_u(\bar
v_1^{-1})$ for $u$ equals $U_v(\bar v_1^{-1})\equiv
e^{(-v\cdot\nabla)\bar v_1^{-1}}$. Since $U_v(\bar v_1^{-1})$ has
no eigenfunctions in $H^1(\bbT^2)$ because $v$ is
relaxation-enhancing, Theorem \ref{fluid} shows that the flow $u$
is also relaxation-enhancing.
\end{example}

Thus, we have
\begin{theorem}\label{fluid-cor}
There exists a time-periodic smooth incompressible flow $u$ on
$\bbT^2$ which is relaxation-enhancing but for each $t\in\bbR$, the
flow $u(\cdot,t)$ is Hamiltonian.
\end{theorem}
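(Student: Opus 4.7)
The proof is essentially carried out in Example \ref{hamiltonian}; the plan is to supply the missing existence input and then assemble the verification. First I would invoke the constructions of \cite{Fayad1,Fayad2} to produce a stationary, smooth, incompressible flow $v\in\Lip(\bbT^2)$ whose associated unitary group $e^{-tv\cdot\nabla}$ has purely continuous spectrum on the mean-zero subspace of $L^2(\bbT^2)$. In particular $v\cdot\nabla$ has no $H^1(\bbT^2)$ eigenfunctions other than constants, so the time-independent case of Theorem \ref{fluid} (the main result of \cite{CKRZ}) guarantees that $v$ is relaxation-enhancing.

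Next I would argue that $v$ cannot be Hamiltonian: if some $H\in C^1(\bbT^2;\alpha\bbT)$ were a Hamiltonian for $v$, then for any smooth $\alpha$-periodic $\omega:\alpha\bbT\to\bbR$ the function $\omega\circ H$ would lie in $H^1(\bbT^2)$ and be annihilated by $v\cdot\nabla$, contradicting relaxation enhancement. By the characterization recalled just before Example \ref{hamiltonian}, this forces the mean $(\bar v_1,\bar v_2)$ to have rationally independent coordinates; in particular $\bar v_1\ne 0$.

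With $v$ fixed, set $b\equiv(\bar v_1,0)$ and $u(x,t)\equiv v(x+bt)-b$ as in Example \ref{hamiltonian}. I would then verify three properties. (i) The flow $u$ is smooth, incompressible (inherited from $v$), and time-periodic with period $\bar v_1^{-1}$, since $b\,\bar v_1^{-1}=(1,0)\equiv 0$ in $\bbT^2$ and $v$ is $\bbZ^2$-periodic. (ii) For each fixed $t$, $u(\cdot,t)$ has mean $(0,\bar v_2)$, whose coordinates are trivially rationally dependent, so $u(\cdot,t)$ is Hamiltonian. (iii) The substitution $Y(t)\equiv X(t)+bt$ gives $Y'(t)=u(X(t),t)+b=v(Y(t))$, so that the flow of $v$ coincides with the flow of $u$ after translation by $bt$; consequently $U_u(\bar v_1^{-1})$ equals $U_v(\bar v_1^{-1})=e^{-v\cdot\nabla/\bar v_1}$ and inherits the absence of nonconstant $H^1(\bbT^2)$ eigenfunctions. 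Theorem \ref{fluid} then concludes that $u$ is relaxation-enhancing.

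The only nontrivial input is the existence of the stationary flow $v$ with purely continuous spectrum, which is precisely the role of \cite{Fayad1,Fayad2}; this is the main obstacle in the sense that it is an external deep result, while the remaining steps are direct verifications from the shift symmetry and Theorem \ref{fluid}.
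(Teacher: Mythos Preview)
Your proposal is correct and follows exactly the construction of Example~\ref{hamiltonian}, which is how the paper proves the theorem; you have simply spelled out in more detail the periodicity of $u$, the reason $v$ cannot be Hamiltonian, and the identification $U_u(\bar v_1^{-1})=U_v(\bar v_1^{-1})$ via the substitution $Y(t)=X(t)+bt$. There is nothing to add.
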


Just as in \cite{CKRZ}, our main result can be formulated and proved
in an abstract Hilbert space setting. Let $\Gamma$ be a
self-adjoint, positive, unbounded operator with a discrete spectrum on
a separable Hilbert space $H$. Let
$0<\lambda_1\leq\lambda_2\leq\dots$ be the eigenvalues of $\Gamma$,
and $e_j$ the corresponding eigenvectors forming an orthonormal
basis in $H$. The (homogeneous) Sobolev space $H^m(\Gamma)$
associated with $\Gamma$ is formed by all vectors $\psi=\sum_j
c_je_j$ such that
\[
\|\psi\|_{H^m(\Gamma)}\equiv \sum\limits_j\lambda_j^m|c_j|^2<\infty.
\]
We use $\langle\cdot,\cdot\rangle$ for the inner product in $H$
and $\|\cdot\|$ and $\|\cdot\|_1$ for the norms in $H$ and in $H^1(\Gamma)$,
respectively. Note that $H^2(\Gamma)$ is the domain $D(\Gamma)$ of $\Gamma$.
%By $\lfloor\xi\rfloor$ we denote integer part of a positive number $\xi$.

Next, we assume that $L(t)$ is a periodic family of self-adjoint
operators on $H$ (without loss of generality assume that the period
is $1$) which satisfies
\smallskip

{\it Condition 1. There is $C_0<\infty$ such that for any $t\in\bbR$ and
any $\psi\in H^1(\Gamma)$ we have
\begin{equation}\label{con1}
\|L(t)\psi\|\leq C_0\|\psi\|_1.
\end{equation}}
\smallskip

Let us also assume that the family $L(t)$ generates a strongly
continuous unitary group $U(t)$ on $H$. That is, for each $\psi_0\in
H$,  $\psi(t)\equiv U(t)\psi_0$ is a weak solution of
\begin{equation}\label{ueq}
\frac{d}{dt}\psi(t)=iL(t)\psi(t), \qquad \psi(0)=\psi_0.
\end{equation}
We let $V\equiv U(1)$ be the (unitary) period operator and
$U(t,s)\equiv U(t)U(s)^*$. Note that due to periodicity of $L(t)$ we
have
\begin{equation}\label{1.2}
U(t,s)=U(t-\lfloor s\rfloor,s-\lfloor s\rfloor)
\end{equation}
for any $s,t\in\bbR$. We will also assume
\smallskip

{\it Condition 2. There is a function $B\in L^\infty_{\rm
loc}(\bbR)$ such that for any $t,s\in\bbR$ and any $\psi\in
H^1(\Gamma)$ we have
\begin{equation}\label{con2}
\|U(t,s)\psi\|_1\leq B(t-s)\|\psi\|_1
\end{equation}}
%with constant $B(t,s)<\infty$ (periodic in $s$) such that for any
%$T>0$
%\begin{equation}
%\sup\limits_{t\in[0,T]}\sup\limits_{s\in[0,1]}B(t,s)\leq
%C_*(T)<\infty.
%\end{equation}}
\smallskip

Notice that
%this means that each $U(t,s)$ is a bounded operator on $H^1(\Gamma)$
%because $\|U(t,s)\psi\|_1 \le B(t)B(-s)\|\psi\|_1$. Moreover,
\eqref{con1} and \eqref{con2} together imply that if $\psi_0\in
H^1(\Gamma)$, then $\psi(t)=U(t)\psi_0$  is a classical
solution of \eqref{ueq} and belongs to $H^1(\Gamma)$.

We are now interested in the behavior of the solutions to the
Bochner differential equation
\begin{equation}\label{eqA}
\frac{d}{dt}\phi^A(t)=iAL(At)\phi^A(t)-\Gamma\phi^A(t),\qquad
\phi^A(0)=\phi_0
\end{equation}
with $A\in\bbR$. When $H\equiv L^2(M)\ominus 1$ is the space of
mean-zero functions from $L^2(M)$, $\Gamma\equiv -\Delta$ and
$L(t)\equiv iu(t)\cdot\nabla$ on $H$, then this is exactly
\eqref{1.0}.

%2'. We have $\|\phi^{\epsilon}(t+s)\|_1^2\leq C_1\|\phi^{\epsilon}(t)\|_1^2$
%uniformly for $s\in[0,1]$ and $\epsilon\in[0,1]$.
%
%3. Unitary evolution $U(t,0)$ is strongly continuous.
%
%3. Unitary operator $V$ has no eigenfunctions in $H^1$.}

\begin{definition} \label{abstrdef}
We say that the family $L(t)$ is {\it relaxation-enhancing} (with
respect to $\Gamma$) if for any $\tau,\delta>0$ there is $A_0>0$
such that for any $A>A_0$ and any $\phi_0\in H$ with $\|\phi_0\|=1$,
the solution $\phi^A(t)$ satisfies
\begin{equation} \label{1.1}
\|\phi^A(\tau)\|<\delta.
\end{equation}
\end{definition}

We now have the following abstract version of Theorem \ref{fluid}.

\begin{theorem}\label{main}
Assume Conditions 1 and 2. Then the periodic family $L(t)$ is
relaxation-enhancing if and only if the unitary operator $V$ has no
eigenfunctions in $H^1(\Gamma)$.
\end{theorem}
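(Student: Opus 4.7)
I would prove the two implications separately: the necessity direction via a direct energy estimate, and the sufficiency direction by contradiction, reducing the analysis to Floquet stroboscopic dynamics governed by the period operator $V=U(1)$. For necessity, suppose $V\psi=e^{i\theta}\psi$ with $\psi\in H^1(\Gamma)$ and $\|\psi\|=1$. Taking $\phi_0=\psi$, the key observation is the uniform bound $\sup_{t\in\bbR}\|U(t)\psi\|_1\leq C_\psi<\infty$: decomposing $t=n+s$ with $s\in[0,1)$, relation \eqref{1.2} together with $V\psi=e^{i\theta}\psi$ gives $U(t)\psi=e^{in\theta}U(s)\psi$, so Condition~2 yields $\|U(t)\psi\|_1\leq(\sup_{[0,1]}B)\|\psi\|_1$. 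Writing $\tilde U_A(t):=U(At)$ and $w(t):=\phi^A(t)-\tilde U_A(t)\psi$, a direct energy computation combined with Young's inequality gives
\[
\tfrac{d}{dt}\|w(t)\|^2 \leq -\|\phi^A(t)\|_1^2+\|\tilde U_A(t)\psi\|_1^2 \leq C_\psi^2
\]
uniformly in $A$, so $\|w(\tau)\|\leq C_\psi\sqrt{\tau}$. Choosing $\tau:=(4C_\psi)^{-2}$ yields $\|\phi^A(\tau)\|\geq 3/4$ for every $A$, contradicting relaxation enhancement with $\delta=1/2$.

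\textbf{Sufficiency.} I would argue by contradiction: if $L(t)$ is not relaxation-enhancing, there exist $\tau,\delta>0$, $A_k\to\infty$, and unit vectors $\phi_{0,k}$ with $\|\phi^{A_k}(\tau)\|\geq\delta$. The energy identity $\tfrac{d}{dt}\|\phi^A\|^2=-2\|\phi^A\|_1^2$ gives $\int_0^\tau\|\phi^{A_k}(s)\|_1^2\,ds\leq(1-\delta^2)/2$, so Chebyshev lets one select $s_k\in[0,\tau/2)$ with $\|\phi^{A_k}(s_k)\|_1\leq M$ and $\|\phi^{A_k}(s_k)\|\geq\delta$. Restarting \eqref{eqA} at $s_k$, I would introduce the Floquet one-period propagator $W_{A_k}$ of \eqref{eqA} on intervals of rescaled length~$1$ (physical length $1/A_k$); a Duhamel/Trotter expansion based on Conditions~1--2 should give $W_{A_k}=U(A_ks_k)V U(A_ks_k)^*+\Oh(1/A_k)$ in a topology controlling the $H^1$ ball, and iterating $N_k\simeq A_k(\tau-s_k)$ times (using contractivity $\|W_{A_k}\|\leq 1$ to control error accumulation) reduces the analysis of $\phi^{A_k}(\tau)$ to a small perturbation of $U(A_ks_k)V^{N_k}U(A_ks_k)^*\phi^{A_k}(s_k)$. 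Since $\Gamma$ has discrete spectrum, $H^1(\Gamma)\hookrightarrow H$ is compact, so along a subsequence $\phi^{A_k}(s_k)\to\phi^*$ strongly in $H$ with $\|\phi^*\|\geq\delta$ and $\phi^*\in H^1(\Gamma)$. A RAGE/mean-ergodic argument for the unitary $V$, combined with the compact embedding to promote weak $H^1$ cluster points of the orbit $\{V^n\phi^*\}$ to strong $H$-limits, then produces an $H^1(\Gamma)$ eigenfunction of $V$, contradicting the hypothesis.

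\textbf{Main obstacle.} The crux is the abstract decay criterion alluded to in the abstract: one must quantify the stroboscopic reduction $W_{A_k}\approx V$ (up to the unitary conjugation noted above) with errors controlled uniformly across $N_k\simeq A_k\tau$ iterations, and then convert the qualitative statement ``no $H^1$ eigenfunctions of $V$'' into quantitative decay of $W_{A_k}^{N_k}$ via an averaging/ergodic argument. In the stationary case of \cite{CKRZ} this step is handled through direct resolvent analysis of $\Gamma+iAL$; in the time-periodic setting there is no single generator, so all spectral information must be routed through $V$ and the Floquet propagator. Propagating $H^1(\Gamma)$ bounds across many periods using only the \emph{locally} bounded function $B$ from Condition~2---where global boundedness on $H^1(\Gamma)$ is recovered exactly on orbits of $V$-eigenvectors, as in the necessity argument---is the principal technical point the proof must handle.
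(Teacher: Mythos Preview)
Your necessity argument is correct and is essentially the paper's own argument, carried out in the $A$-formulation rather than the rescaled $\epsilon$-formulation.

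The sufficiency sketch, however, has two genuine gaps.

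\emph{Error accumulation.} You approximate the one-period propagator $W_{A_k}$ by a unitary conjugate of $V$ up to $O(1/A_k)$ and then iterate $N_k\simeq A_k\tau$ times. Contractivity $\|W_{A_k}\|\le 1$ only lets you telescope $W_{A_k}^{N_k}-(\text{conj.}\,V)^{N_k}$ into $N_k$ terms each of size $O(1/A_k)$, so the total error is $O(N_k/A_k)=O(\tau)$, which is $O(1)$ and carries no information. In the paper the comparison of $\phi^\epsilon$ with the free evolution (Lemma~\ref{sdist}) is used only over a \emph{fixed} number $\tau_1$ of periods, independent of $\epsilon$; the argument then restarts and iterates this basic block, collecting a definite decay factor from each block rather than approximating the full evolution by the free one.

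\emph{Producing an $H^1$ eigenfunction.} Even granting your reduction, the final step does not close. You only know $\phi^*\in H^1(\Gamma)$; there is no control on $\|V^n\phi^*\|_1$ for $n\ge 1$, and in fact the hypothesis ``no $H^1$ eigenfunctions of $V$'' forces $\|P_N U(t)P_p\phi^*\|_1$ to grow in average (this is precisely Lemma~\ref{point}). Hence the orbit $\{V^n\phi^*\}$ is typically \emph{unbounded} in $H^1(\Gamma)$, so it has no weak $H^1$ cluster points to promote, and no RAGE/mean-ergodic argument manufactures an $H^1$ eigenfunction from it. The paper uses this blow-up in the opposite, constructive direction: whenever $\|\phi^\epsilon(\tau_0)\|_1$ is small, the nearby free orbit must have large average $H^1$-norm on $[\tau_0,\tau_0+\tau_1]$ (combining the RAGE estimate of Corollary~\ref{scont} on $P_c$ with Lemma~\ref{point} on $P_p$), hence so does $\phi^\epsilon$, and the energy identity $\tfrac{d}{dt}\|\phi^\epsilon\|^2=-2\epsilon\|\phi^\epsilon\|_1^2$ converts this into the decay factor \eqref{important}. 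Your ``main obstacle'' paragraph correctly identifies the difficulty but the proposed compactness route does not resolve it; the missing idea is this dichotomy (large $H^1$-norm $\Rightarrow$ decay by Lemma~\ref{large}; small $H^1$-norm $\Rightarrow$ free dynamics forces large average $H^1$-norm over the next $\tau_1$ periods $\Rightarrow$ decay again), iterated over the whole time interval.
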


Notice that Theorem \ref{fluid} now follows directly from this
result.

\begin{proof}[Proof of Theorem \ref{fluid}]
As mentioned above, we let $H\equiv L^2(M)\ominus 1$, and
$\Gamma\equiv -\Delta$ and $L(t)\equiv iu(t)\cdot\nabla$ restricted
to $H$. Conditions 1 and 2 are now implied by Lipschitzness of u
with $C_0\equiv \|u\|_\infty$ and $B(t)\equiv e^{|t|\|\nabla
u\|_\infty}$ (see \cite{CKRZ}), and so Theorem \ref{main} gives
Theorem \ref{fluid} for all $\phi_0$ with $\bar\phi_0=0$. Since
$\bar\phi_0$ is conserved by \eqref{1.0}, the result follows.
\end{proof}

The final extension we discuss in this paper is to the case of {\it
porous medium equations}, where $\Delta \phi^A$ is replaced by
$\Delta (\phi^A)^q,$ $q>1.$ We discuss the setting and the result in
Section~\ref{poroussec}.

\smallskip
\noindent {\bf Acknowledgement.} AK and RS have been supported in
part by the NSF-DMS grant 0314129. AZ has been partially supported
by the NSF-DMS grant 0632442. The authors thank Leonid Polterovich
and Lenya Ryzhik for useful discussions.

%%%%%%%%%%%%%%%%%%%%%%%%%%%%%%%%%%%%%%%%%%%%%%%%%%%%%%%%%%%%%%%%%%%%
\section{Proof of Theorem \ref{main}}
%%%%%%%%%%%%%%%%%%%%%%%%%%%%%%%%%%%%%%%%%%%%%%%%%%%%%%%%%%%%%%%%%%%%

In this section we prove Theorem \ref{main}. As in \cite{CKRZ}, we
reformulate \eqref{eqA} as the small diffusion--long time problem
\begin{equation}\label{eq}
\frac{d}{dt}\phi^\epsilon(t)=iL(t)\phi^\epsilon(t)-\epsilon\Gamma\phi^\epsilon(t),\qquad
\phi^\epsilon(0)=\phi_0
\end{equation}
by setting $\eps\equiv A^{-1}$ and rescaling time by a factor of
$1/\epsilon$. Notice that \eqref{1.1} now becomes
\begin{equation} \label{2.1}
\|\phi^\epsilon(\tau/\epsilon)\|<\delta.
\end{equation}

We first note the following existence and uniqueness result from
\cite{CKRZ}.

\begin{lemma}\label{exun}
Assume that Condition 1 is fulfilled. Then for any $\epsilon> 0$ and
$T>0$, there exists a unique solution $\phi^\epsilon(t)$ of the
equation \eqref{eq} on $[0,T]$ with initial data $\phi_0\in
H^1(\Gamma)$. This solution satisfies
\[
\phi^\epsilon(t)\in L^2([0,T],H^2(\Gamma))\cap
C([0,T],H^1(\Gamma)),\qquad \frac{d}{dt}\phi^\epsilon(t)\in
L^2([0,T],H).
\]
\end{lemma}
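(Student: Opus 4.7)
\textbf{Proof proposal for Lemma \ref{exun}.} The equation \eqref{eq} is a linear parabolic equation in which the dissipative term $-\eps\Gamma$ dominates the first-order skew-adjoint perturbation $iL(t)$ in the sense made precise by Condition~1. I would use the standard Faedo--Galerkin scheme together with two energy estimates.

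First, I set up the Galerkin approximations. Let $P_N$ denote the orthogonal projection onto $H_N\equiv\mathrm{span}\{e_1,\dots,e_N\}$ and consider
\[
\frac{d}{dt}\phi_N^\eps(t)=iP_N L(t)\phi_N^\eps(t)-\eps\Gamma\phi_N^\eps(t),\qquad \phi_N^\eps(0)=P_N\phi_0.
\]
This is a linear ODE on the finite-dimensional space $H_N$ with coefficients that are bounded and (strongly) measurable in $t$, so standard ODE theory yields a unique global solution $\phi_N^\eps\in C^1([0,T];H_N)$. The task is then to prove $N$-uniform bounds sufficient to extract a limit.

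The two a priori estimates proceed by testing against $\phi_N^\eps$ and against $\Gamma\phi_N^\eps$. Self-adjointness of $L(t)$ makes $\langle iL(t)\phi,\phi\rangle$ purely imaginary, so taking the real part of the inner product of the equation with $\phi_N^\eps$ yields
\[
\tfrac{1}{2}\tfrac{d}{dt}\|\phi_N^\eps(t)\|^2+\eps\|\phi_N^\eps(t)\|_1^2=0,
\]
which gives $L^\infty_t H$ and $L^2_t H^1(\Gamma)$ bounds. For the higher estimate, inner-product with $\Gamma\phi_N^\eps$ and use Condition~1 together with Cauchy--Schwarz and Young's inequality:
\[
\bigl|\langle iL(t)\phi_N^\eps,\Gamma\phi_N^\eps\rangle\bigr|\le C_0\|\phi_N^\eps\|_1\|\Gamma\phi_N^\eps\|\le \tfrac{C_0^2}{2\eps}\|\phi_N^\eps\|_1^2+\tfrac{\eps}{2}\|\phi_N^\eps\|_{H^2(\Gamma)}^2,
\]
which, after absorbing the last term into the dissipation, gives
\[
\tfrac{1}{2}\tfrac{d}{dt}\|\phi_N^\eps(t)\|_1^2+\tfrac{\eps}{2}\|\phi_N^\eps(t)\|_{H^2(\Gamma)}^2\le\tfrac{C_0^2}{2\eps}\|\phi_N^\eps(t)\|_1^2.
\]
Gronwall yields an $N$-uniform bound for $\|\phi_N^\eps\|_{L^\infty_t H^1(\Gamma)}$, and integrating in time gives an $N$-uniform bound in $L^2_t H^2(\Gamma)$. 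Reading $\tfrac{d}{dt}\phi_N^\eps$ off the equation and applying Condition~1 then bounds $\dot\phi_N^\eps$ uniformly in $L^2_t H$.

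With these uniform bounds, weak/weak-$*$ compactness extracts a subsequence converging in the appropriate topologies to a limit $\phi^\eps$ which, by linearity of \eqref{eq} and lower semicontinuity of the norms, satisfies the equation in $L^2_t H$ with the claimed regularity. Continuity $\phi^\eps\in C([0,T];H^1(\Gamma))$ follows from the standard Lions--Magenes interpolation lemma applied to $\phi^\eps\in L^2_t H^2(\Gamma)$ and $\dot\phi^\eps\in L^2_t H$ (the relevant intermediate space being $H^1(\Gamma)$). Uniqueness is immediate from the $L^2$ estimate: the difference of two solutions satisfies the same equation with zero data and the dissipation identity forces it to vanish. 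The only delicate point is the $H^1(\Gamma)$ estimate, because $L(t)$ need not commute with $\Gamma$ in any nice way; Condition~1 is exactly the input needed to bound the cross term and close the estimate via Young's inequality against the dissipation, so this is where the proof really uses the hypothesis rather than routine parabolic machinery.
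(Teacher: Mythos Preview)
Your proposal is correct and is precisely the approach the paper indicates: the paper does not give a detailed proof but simply remarks that the result is standard, obtained by Galerkin approximation followed by uniqueness and regularity (referring to Evans), with Condition~1 being the input that lets the parabolic energy estimates close. Your sketch fills in exactly those details and uses Condition~1 at exactly the step the paper points to.
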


{\it Remarks.} 1. The proof of Lemma~\ref{exun} is standard and
proceeds by constructing a weak solution using Galerkin
approximations and then establishing uniqueness and regularity. We
refer, for example, to Evans \cite{Ev} where the construction is
carried out for parabolic PDEs. Given Condition 1, this can be
applied verbatim to the general case.
\smallskip

2. The result is also valid for initial data $\phi_0\in H$, but the
solution has rougher properties on intervals containing $t=0$,
namely
\[
\phi^\epsilon(t)\in L^2([0,T],H^1(\Gamma))\cap
C([0,T],H^{-1}(\Gamma)),\qquad \frac{d}{dt}\phi^\epsilon(t)\in
L^2([0,T],H^{-1}(\Gamma)).
\]
Existence of a rougher solution can also be derived from general
semigroup theory, by checking that $iL-\epsilon\Gamma$ satisfies the
conditions of the Hille-Yosida theorem and thus generates a strongly
continuous contraction semigroup in $H$ (see, e.g. \cite{EN}).
\smallskip

\begin{proof}[Proof of Theorem~\ref{main}]
%The proof is based on that of Theorem 1.4 in \cite{CKRZ}, but
%extends it to the time-periodic setting.
Let us first assume that
$V\psi=e^{iE}\psi$ for some $\psi\in H^1(\Gamma)$, $\|\psi\|=1$.
We will then show that the family $L(t)$ is not
relaxation-enhancing. For $\eps\ge 0$ let $\phi^\epsilon(t)$ be the
solution of \eqref{eq} with $\phi^\epsilon(0)=\psi$. Then we have
\begin{equation}\label{imp}
\left|\frac{d}{dt}\langle\phi^\epsilon(t),\phi^0(t)\rangle\right| =
\epsilon|\langle\Gamma\phi^\epsilon(t),\phi^0(t)\rangle|\leq
\frac\epsilon 2 (\|\phi^\epsilon(t)\|_1^2+\|\phi^0(t)\|_1^2).
\end{equation}
By multiplying equation \eqref{eq} by $\phi^\epsilon(t)$ and
integrating in time we obtain
\begin{equation}\label{imp1}
2\epsilon\int\limits_0^{\infty}\|\phi^\epsilon(t)\|_1^2dt\leq\|\phi^\epsilon(0)\|^2=1.
\end{equation}
Now $V^n\psi=e^{inE}\psi$ and periodicity of $L(t)$ imply
$\phi^0(n+t)=e^{inE}\phi(t)$ for $n\in\bbZ$ and so due to Condition
2,
\begin{equation}\label{imp2}
\int\limits_0^{\tau/\epsilon}\|\phi^0(t)\|_1^2dt =
\sum\limits_{n=0}^{\lfloor\tau/\epsilon\rfloor-1}\int\limits_0^1\|\phi^0(t)\|_1^2dt
+\int\limits_0^{\{\tau/\epsilon\}}\|\phi^0(t)\|_1^2dt\leq
\frac{\tau}\epsilon B_1^2 \|\psi\|_1^2
\end{equation}
with
\begin{equation}\label{2.4}
B_1\equiv \sup_{t\in[0,1]}B(t),
\end{equation}
where $\lfloor x\rfloor$ and $\{x\}$ are the integer and
fractional parts of $x$. Substituting \eqref{imp1} and \eqref{imp2}
into \eqref{imp} we obtain after integration
\[
|\langle\phi^\epsilon(\tau/\epsilon),\phi^0(\tau/\epsilon)\rangle|\geq
\langle\phi^\epsilon(0),\phi^0(0)\rangle- \frac14- \frac\tau 2
B_1^2\|\psi\|_1^2 =\frac34-\frac\tau 2 B_1^2 \|\psi\|_1^2.
\]
Thus for $\tau\leq B_1^{-2}\|\psi\|_1^{-2}$ we have
$\|\phi^\epsilon(\tau/\epsilon)\|\geq1/4$ for any $\epsilon$, and
hence the family $L(t)$ is not relaxation-enhancing.

%\begin{lemma}
%Under Conditions 1 and 2, evolution $U(t)$ is strongly continuous in
%$H$.
%end{lemma}
%\begin{proof}
%For any $\psi_0\in H^1(\Gamma)$ due to \eqref{ueq}--\eqref{con2} we
%have
%\begin{equation}
%\begin{split}
%&
%\|U(s+\tau)\psi_0-U(s)\psi_0\|\leq\int\limits_s^{s+\tau}\|L(t)U(t)\psi_0\|dt\leq\cr
%& C_0\int\limits_s^{s+\tau}\|U(t)\psi_0\|_1 dt\leq
%C_0\|\psi_0\|_1\int\limits_s^{s+\tau}B(t)dt\to0,\ \ \tau\to0.
%\end{split}
%\end{equation}
%Since $H^1(\Gamma)$ is dense in $H$ and evolution $U(t)$ is unitary
%we complete the proof.
%\end{proof}

Let us now assume that none of the eigenfunctions of $V$ belong to
$H^1(\Gamma)$. We will then show that the family $L(t)$ is
relaxation-enhancing. We start with some auxiliary lemmas.

%\begin{lemma}\label{pres}
%If $\psi$ is an eigenfunction of $V$, then $U(t)\psi\not\in
%H^1(\Gamma)$ for any $t\in\bbR$.
%\end{lemma}
%
%\begin{proof}
%If $U(t)\psi\in H^1(\Gamma)$, then $V\psi = U(1,t)U(t)\psi\in
%H^1(\Gamma)$ by Condition 2, which is a contradiction.
%\end{proof}

\begin{lemma}\label{large}
Suppose that for all $t\in (a,b)$ we have
$\|\phi^\epsilon(t)\|_1^2\geq N\|\phi^\epsilon(t)\|^2$. Then
\[
\|\phi^\epsilon(b)\|^2\leq e^{-2\epsilon
N(b-a)}\|\phi^\epsilon(a)\|^2.
\]
\end{lemma}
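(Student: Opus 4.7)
The plan is to derive a differential inequality for $\|\phi^\eps(t)\|^2$ and then close with Gronwall's inequality. First I would differentiate in $t$ to get
\[
\frac{d}{dt}\|\phi^\eps(t)\|^2 = 2\Real\lan \phi^\eps(t), \tfrac{d}{dt}\phi^\eps(t) \ran,
\]
and substitute the evolution equation \eqref{eq}. Self-adjointness of $L(t)$ renders $\lan \phi^\eps(t), iL(t)\phi^\eps(t)\ran$ purely imaginary, so it drops out of the real part. The surviving term is $-2\eps\lan \phi^\eps(t), \Gamma\phi^\eps(t)\ran$, which by definition of the $H^1(\Gamma)$ norm equals $-2\eps\|\phi^\eps(t)\|_1^2$.

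Next, on $(a,b)$ the hypothesis $\|\phi^\eps(t)\|_1^2 \geq N\|\phi^\eps(t)\|^2$ converts this identity into the differential inequality
\[
\frac{d}{dt}\|\phi^\eps(t)\|^2 \leq -2\eps N \|\phi^\eps(t)\|^2.
\]
Gronwall's lemma applied between $a$ and $b$ immediately yields the claimed bound $\|\phi^\eps(b)\|^2\leq e^{-2\eps N(b-a)}\|\phi^\eps(a)\|^2$.

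The only non-routine point is justifying the pointwise computation of the time derivative. For $\phi_0\in H^1(\Gamma)$, Lemma~\ref{exun} gives $\phi^\eps\in C([0,T],H^1(\Gamma))\cap L^2([0,T],H^2(\Gamma))$ with $\frac{d}{dt}\phi^\eps\in L^2([0,T],H)$, so the pairing $\lan\phi^\eps(t),\frac{d}{dt}\phi^\eps(t)\ran$ is well defined for a.e.\ $t$, $\|\phi^\eps(t)\|^2$ is absolutely continuous, and the identity above integrates in the classical sense. The case of rougher $\phi_0\in H$ is handled by approximation using the second part of the remark following Lemma~\ref{exun}, and passing to the limit preserves the inequality. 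I do not expect any serious obstacle beyond this regularity bookkeeping.
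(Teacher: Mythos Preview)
Your proposal is correct and matches the paper's proof essentially line for line: the paper computes $\frac{d}{dt}\|\phi^\eps\|^2 = -2\eps\|\phi^\eps\|_1^2$ (noting the $iL(t)$ term drops out) and says the result follows by ``integration in time,'' which is exactly your Gronwall step. Your added regularity discussion is more careful than the paper's own treatment, but there is no substantive difference in approach.
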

\begin{proof}
This follows immediately from
% Recall that if $\phi\in H^1(\Gamma)$,
%then $\Gamma\phi\in H^{-1}(\Gamma)$ and
%$\langle\Gamma\phi,\phi\rangle=\|\phi\|_1^2$. The regularity
%conditions and the fact that $L$ is self-adjoint allow us to compute
\begin{equation}\label{equality}
\frac{d}{dt}\|\phi^\epsilon\|^2=2\Re\langle\phi^\epsilon,\phi^\epsilon_t\rangle
=-2\langle\phi^\epsilon,\epsilon\Gamma\phi^\epsilon\rangle =
-2\epsilon\|\phi^\epsilon\|_1^2 \le -2\epsilon
N\|\phi^\epsilon(t)\|^2
\end{equation}
and integration in time.
\end{proof}

This lemma shows that as long as the $H^1(\Gamma)$-norm of
$\phi^\epsilon$ stays large, its $H$-norm will decay rapidly
relative to $e^{-\epsilon t}$ (which is what we need to establish
\eqref{2.1}). We next need to consider the case when
$\|\phi^\epsilon(\tau_0)\|_1^2\le N\|\phi^\epsilon(\tau_0)\|^2$ for
some $\tau_0$. First we show that in this case the evolution
\eqref{eq} will stay for some time relatively close (with respect to
$\epsilon$) to the ``free'' evolution $U(t,\tau_0)\phi(\tau_0)$.

%\begin{lemma}\label{dist}
%Let $\phi^\epsilon(t+s)$ and $\phi^0(t+s)$ solve equation \eqref{eq} with $\epsilon$ and $\epsilon=0$ correspondingly
%and initial condition $\phi^\epsilon(s)=\phi^0(s)=\phi_0$. Then we have
%\begin{equation}\label{disteq}
%\frac{d}{dt}\|\phi^\epsilon(t+s)-\phi^0(t+s)\|^2\leq\frac12\epsilon\|\phi^0(t+s)\|_1^2\leq\frac12
%\epsilon B^2(t,s)\|\phi_0\|_1^2.
%\end{equation}
%\end{lemma}
%\begin{proof}
%Regularity guaranteed by conditions 1 and 2 and Lemma~\ref{exun}
%allows us to multiply the equation
%$$
%(\phi^\epsilon-\phi^0)'=iL(t)(\phi^\epsilon-\phi^0)-\epsilon\Gamma\phi^\epsilon
%$$
%by $\phi^\epsilon-\phi^0$. We obtain
%$$
%\frac{d}{dt}\|\phi^\epsilon-\phi^0\|^2\leq2\epsilon(\|\phi^\epsilon\|_1\|\phi^0\|_1-\|\phi^\epsilon\|_1^2)\leq
%\frac12\epsilon\|\phi^0\|_1^2,
%$$
%which is the first inequality in \eqref{disteq}. The second inequality
%follows simply from the condition 2.
%\end{proof}

\begin{lemma}\label{sdist}
Let $\phi^\epsilon(t)$ and $\phi^0(t)$ be solutions of the
equation \eqref{eq} with
$\phi^\epsilon(\tau_0)=\phi^0(\tau_0)=\phi_0 \in H^1(\Gamma)$. Then for
any $\tau\ge 0$ we have
\[
\|\phi^\epsilon(\tau_0+\tau)-\phi^0(\tau_0+\tau)\|^2\leq\frac\epsilon
2 \|\phi_0\|_1^2\int\limits_0^\tau B(t)^2dt.
\]
\end{lemma}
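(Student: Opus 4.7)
The plan is to directly estimate the squared norm of the difference $w(t) \equiv \phi^\epsilon(\tau_0+t) - \phi^0(\tau_0+t)$ via an energy identity, exploiting the fact that the unitary part $iL(t)$ is common to both equations and therefore drops out.

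First I would note $w(0)=0$ and, by subtracting \eqref{eq} for $\phi^0$ from \eqref{eq} for $\phi^\epsilon$ (both viewed at time $\tau_0+t$),
\[
w'(t) = iL(\tau_0+t) w(t) - \epsilon\,\Gamma\phi^\epsilon(\tau_0+t).
\]
Then I would compute $\tfrac{d}{dt}\|w(t)\|^2 = 2\Re\langle w(t), w'(t)\rangle$. Self-adjointness of $L(\tau_0+t)$ kills $2\Re\langle w, iL(\tau_0+t)w\rangle$, leaving
\[
\tfrac{d}{dt}\|w(t)\|^2 = -2\epsilon\,\Re\langle w(t), \Gamma\phi^\epsilon(\tau_0+t)\rangle.
\]

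Next I would substitute $\phi^\epsilon = w + \phi^0$ on the right so that the leading term becomes dissipative: using $\Re\langle w,\Gamma w\rangle = \|w\|_1^2$,
\[
\tfrac{d}{dt}\|w(t)\|^2 = -2\epsilon\|w(t)\|_1^2 - 2\epsilon\,\Re\langle \Gamma^{1/2}w(t),\Gamma^{1/2}\phi^0(\tau_0+t)\rangle.
\]
Cauchy--Schwarz followed by completing the square (equivalently, $-2a^2+2ab\le b^2/2$) yields
\[
\tfrac{d}{dt}\|w(t)\|^2 \le \tfrac{\epsilon}{2}\,\|\phi^0(\tau_0+t)\|_1^2.
\]
Here the regularity of $\phi^\epsilon$ and $\phi^0$ guaranteed by Lemma~\ref{exun} (in particular $\phi^\epsilon \in C([0,T],H^1(\Gamma))$) makes each step rigorous.

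Finally I would integrate from $0$ to $\tau$, use $w(0)=0$, and apply Condition~2 to the free flow:
\[
\|\phi^0(\tau_0+s)\|_1 = \|U(\tau_0+s,\tau_0)\phi_0\|_1 \le B(s)\|\phi_0\|_1,
\]
giving the claimed bound
\[
\|w(\tau)\|^2 \le \tfrac{\epsilon}{2}\,\|\phi_0\|_1^2 \int_0^\tau B(s)^2\,ds.
\]
There is no substantial obstacle; the only thing to watch is that the completion-of-the-square constant is sharp enough to match the factor $\epsilon/2$ in the statement, and that one applies Condition~2 with the correct time shift (which is legitimate because $U(\tau_0+s,\tau_0)\phi_0 = \phi^0(\tau_0+s)$ by the definition of the free evolution).
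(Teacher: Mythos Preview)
Your proof is correct and follows essentially the same energy-estimate approach as the paper: subtract the equations, pair with the difference so the $iL$ term vanishes, and use the elementary inequality $-2a^2+2ab\le b^2/2$ together with Condition~2. The only cosmetic difference is that the paper expands $\langle \phi^\epsilon-\phi^0,\Gamma\phi^\epsilon\rangle$ as $\|\phi^\epsilon\|_1^2-\langle\phi^0,\Gamma\phi^\epsilon\rangle$ (so $a=\|\phi^\epsilon\|_1$) whereas you expand it as $\|w\|_1^2+\langle w,\Gamma\phi^0\rangle$ (so $a=\|w\|_1$); both routes yield the same bound.
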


\begin{proof}
Regularity guaranteed by Conditions 1 and 2 and Lemma~\ref{exun}
allows us to multiply the equation
\[
(\phi^\epsilon-\phi^0)'=iL(t)(\phi^\epsilon-\phi^0)-\epsilon\Gamma\phi^\epsilon
\]
by $\phi^\epsilon-\phi^0$. We obtain
\[
\frac{d}{dt}\|\phi^\epsilon(t)-\phi^0(t)\|^2\leq
2\epsilon(\|\phi^\epsilon(t)\|_1\|\phi^0(t)\|_1-\|\phi^\epsilon(t)\|_1^2)\leq
\frac\epsilon 2\|\phi^0(t)\|_1^2 \le \frac\epsilon 2
B(t-\tau_0)^2\|\phi_0\|_1^2,
\]
with the last inequality using Condition 2. Integration in time now
gives the result.
\end{proof}

We now need to obtain suitable estimates on the free evolution.  We
denote by $P_c$ the orthogonal projection in $H$ on the continuous
spectral subspace of the unitary operator $V$ and by $P_p=I-P_c$ the
orthogonal projection on the pure point spectral subspace of $V$. We
also denote by $P_N$ the orthogonal projection onto the subspace of
$H$ generated by eigenfunctions of $\Gamma$ belonging to eigenvalues
$\lambda_1,\dots,\lambda_N$. Note that $P_N$ is a compact operator
because $\Gamma$ has a discrete spectrum.

\begin{lemma}\label{cont}
Let $C$ be any compact operator. Then the operator norm
\[
\left\|\frac{1}{T}\int_0^T U(t)^*CU(t)P_c dt\right\|\to 0 \quad
\text{as $T\to\infty$}.
\]
\end{lemma}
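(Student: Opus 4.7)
The plan is to discretize the time average using the periodicity of $L(t)$, then kill the resulting discrete Ces\`aro sum via a Hilbert--Schmidt computation and Wiener's theorem applied to spectral measures of $V$ that are continuous on $P_c H$. By periodicity of $L$ and uniqueness for the Cauchy problem $\psi'=iL(t)\psi$, one has $U(t+1)=U(t)V$; iterating gives $U(n+s)=U(s)V^n$ for $n\in\bbN$ and $s\in[0,1)$, hence $U(t)^*CU(t)=V^{-n}C_sV^n$, where $C_s:=U(s)^*CU(s)$ is compact with $\|C_s\|\le\|C\|$. Writing $N:=\lfloor T\rfloor$ and moving $V^n$ past the commuting spectral projection $P_c$, I would obtain
\[
\frac{1}{T}\int_0^T U(t)^*CU(t)P_c\,dt \;=\; \frac{N}{T}\int_0^1 A_N(s)\,ds + R_T,
\]
where $A_N(s):=\frac{1}{N}\sum_{n=0}^{N-1}V^{-n}C_sV^nP_c$ and $\|R_T\|=\Oh(T^{-1})$ is a boundary remainder. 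Since $\|A_N(s)\|\le\|C\|$ uniformly in $s$ and $N$, bounded convergence in $s$ reduces the lemma to showing $\|A_N(s)\|\to 0$ for each fixed $s$.

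Fix $s$ and write $K=C_s$. Since finite-rank operators are operator-norm dense in the compacts, by linearity it suffices to treat a rank-one $K=|\phi\rangle\langle\psi|$, for which $A_N = \frac{1}{N}\sum_{n=0}^{N-1}|V^{-n}\phi\rangle\langle V^{-n}P_c\psi|$. A direct cyclic-trace calculation using $[V,P_c]=0$ and $P_c^2=P_c$ then gives
\[
\|A_N\|_{\mathrm{HS}}^2 \;=\; \frac{1}{N}\sum_{|j|<N}\!\left(1-\frac{|j|}{N}\right)\langle P_c\psi,V^j\psi\rangle\,\overline{\langle\phi,V^j\phi\rangle}.
\]
The first factor is the $j$-th Fourier coefficient of the complex spectral measure of $V$ associated with the pair $(P_c\psi,\psi)$; this measure is atomless because $P_c\psi$ lies in the continuous spectral subspace of $V$. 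Wiener's theorem therefore gives $\frac{1}{N}\sum_{|j|<N}|\langle P_c\psi,V^j\psi\rangle|^2\to 0$, while $\frac{1}{N}\sum_{|j|<N}|\langle\phi,V^j\phi\rangle|^2$ stays uniformly bounded by $\|\phi\|^4$. Cauchy--Schwarz then yields $\|A_N\|_{\mathrm{HS}}\to 0$, and a fortiori $\|A_N\|\to 0$.

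The main obstacle is upgrading the usual RAGE-type strong decay $\frac{1}{N}\sum_n\|KV^n P_c\chi\|^2\to 0$, which holds for each fixed $\chi$ but not uniformly on the unit ball, to operator-norm decay. Compactness of $K$ is what saves us: it permits the reduction to rank one, and the rank-one Hilbert--Schmidt computation above packages the needed uniformity in $\chi$ into a single Fourier sum controlled by an atomless spectral measure, to which Wiener's theorem applies directly.
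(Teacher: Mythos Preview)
Your argument is correct and follows essentially the same route as the paper: both discretize via the periodicity relation $U(n+s)=U(s)V^n$, use dominated (bounded) convergence to reduce to a fixed $s\in[0,1]$, and then reduce the compact operator $C_s=U(s)^*CU(s)$ to rank one. The only difference is that the paper cites Theorem~5.8 of \cite{CFKS} for the last step, whereas you spell out that argument explicitly via the Hilbert--Schmidt computation and Wiener's theorem---which is exactly how the cited result is proved.
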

\begin{proof}
Denote $D=\lfloor T\rfloor$. We have
\[
\left\|\frac{1}{T}\int_0^T U^*(t)CU(t)P_c dt\right\| =
\left\|\int_0^1 \frac{1}{D}\sum\limits_{n=1}^{D}
(V^*)^{n-1}U(t)^*CU(t)V^{n-1}P_c dt\right\| + O(D^{-1}).
\]
By the dominated convergence theorem it is sufficient to prove that for
any $t\in[0,1]$
\[
\left\|\frac{1}{D}\sum\limits_{n=1}^{D}(V^*)^{n-1}U(t)^*CU(t)V^{n-1}P_c\right\|\to
0 \quad \text{as $D\to\infty$}.
\]
The operator $\tilde{C}=U(t)^*CU(t)$ is compact, so we can reduce
the problem to the case of $\tilde{C}$ being rank 1. The proof in
this case is identical to that of Theorem 5.8 in \cite{CFKS} with
integrals replaced by sums.
\end{proof}

Compactness of $P_N$ and $\|P_N U(t)P_c\phi\|^2 = \langle P_c\phi,
U(t)^*P_N U(t)P_c\phi \rangle$ now gives

\begin{corollary}\label{scont}
For any $N$ and $\sigma>0$ there exists $T_c(N,\sigma)$ such that
for any $T\geq T_c(N,\sigma)$ and any $\phi\in H$ with
$\|\phi\|\leq1$ we have
\[
\frac{1}{T}\int_0^T \|P_N U(t)P_c\phi\|^2 dt\leq\sigma.
\]
\end{corollary}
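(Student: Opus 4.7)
The plan is to apply Lemma \ref{cont} to the compact operator $C = P_N$; the key step is to massage the time average of $\|P_N U(t) P_c \phi\|^2$ into a quadratic form in an operator of the type controlled by that lemma. Note that $P_N$ is indeed compact (in fact, finite rank) since $\Gamma$ has discrete spectrum, so Lemma \ref{cont} applies.

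Using the hint, for each $t$ we have
\[
\|P_N U(t) P_c \phi\|^2 = \langle P_c \phi,\, U(t)^* P_N U(t) P_c \phi \rangle.
\]
Averaging over $t\in[0,T]$ and pulling $P_c = P_c^*$ outside the integral gives
\[
\frac{1}{T}\int_0^T \|P_N U(t) P_c \phi\|^2\, dt
= \left\langle \phi,\, P_c\!\left(\frac{1}{T}\int_0^T U(t)^* P_N U(t)\, dt\right)\! P_c \phi \right\rangle.
\]
Hence, for any $\phi$ with $\|\phi\|\le 1$, the left-hand side is bounded in absolute value by
\[
\left\| P_c \!\left(\frac{1}{T}\int_0^T U(t)^* P_N U(t)\, dt\right)\! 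P_c \right\|
\le \left\| \left(\frac{1}{T}\int_0^T U(t)^* P_N U(t)\, dt\right)\! P_c \right\|,
\]
where the inequality uses $\|P_c\|\le 1$.

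Now Lemma \ref{cont}, applied with $C = P_N$, says exactly that this latter operator norm tends to $0$ as $T\to\infty$. Therefore, for fixed $N$ and $\sigma>0$, we may choose $T_c(N,\sigma)$ so large that for every $T\ge T_c(N,\sigma)$ the above operator norm is at most $\sigma$. This yields the stated uniform bound over all $\phi$ with $\|\phi\|\le 1$, completing the proof. There is no real obstacle beyond the bookkeeping above; the corollary is essentially a repackaging of Lemma \ref{cont} as a quadratic-form estimate that is uniform in the unit ball of $H$.
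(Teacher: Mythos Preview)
Your proof is correct and is exactly the approach the paper takes: the paper's entire argument is the one-line hint ``Compactness of $P_N$ and $\|P_N U(t)P_c\phi\|^2 = \langle P_c\phi, U(t)^*P_N U(t)P_c\phi \rangle$ now gives,'' and you have simply spelled out the details of bounding the resulting quadratic form by the operator norm from Lemma~\ref{cont}.
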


Next we consider the free evolution of $P_p\phi$.

\begin{lemma}\label{point}
Let $K\subset S\equiv  \{\phi\in H:\ \|\phi\|=1\}$ be a compact set.
Consider the set $K_1\equiv  \{\phi\in K:\ \|P_p\phi\|\geq 1/2\}$.
Then for any $\Omega>0$ we can find $N_p(\Omega,K)$ and
$T_p(\Omega,K)$ such that for any $N\geq N_p(\Omega,K)$, any $T\geq
T_p(\Omega,K)$, and any $\phi\in K_1$ we have
\[
\frac{1}{T}\int_0^T\|P_NU(t)P_p\phi\|_1^2dt\geq \Omega.
\]
\end{lemma}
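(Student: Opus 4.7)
The overall strategy is to evaluate the long-time limit of
$G_{N,T}(\phi) := \frac{1}{T}\int_0^T \|P_N U(t)P_p\phi\|_1^2\,dt$
via the spectral decomposition of the unitary period operator $V$, identify this limit with a functional $F_N(\psi)$ that blows up as $N\to\infty$ for every vector in the compact set $\tilde K := P_p(K_1)\subset H$, and then use compactness together with continuity to upgrade pointwise statements to uniform ones.

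To carry this out, I would fix $\phi\in K_1$, set $\psi = P_p\phi\in\tilde K$, and decompose $\psi = \sum_E \psi_E$ according to the mutually orthogonal eigenspaces of $V$, where $\psi_E := \Pi_E\psi$ lies in the $e^{iE}$-eigenspace. The periodicity relation~\eqref{1.2} gives $U(n+s) = U(s)V^n$ for integer $n$, so $U(n+s)\psi = \sum_E e^{inE}U(s)\psi_E$. Expanding $\|P_N U(n+s)\psi\|_1^2$ into a double sum over $(E,E')$ and averaging first in $n$ from $0$ to $\lfloor T\rfloor - 1$ and then in $s\in[0,1]$, the off-diagonal terms $E\ne E'$ carry the factor $\frac{1}{D}\sum_{n<D} e^{in(E-E')}$, which vanishes in the limit; dominated convergence applies thanks to the uniform bound $\|P_N\chi\|_1^2\le\lambda_N\|\chi\|^2$, producing
\begin{equation*}
\lim_{T\to\infty}G_{N,T}(\phi) \;=\; F_N(\psi) := \int_0^1\sum_E\|P_N U(s)\psi_E\|_1^2\,ds.
\end{equation*}

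The next step exploits the standing hypothesis that no eigenvector of $V$ belongs to $H^1(\Gamma)$: each $\psi_E$ is such an eigenvector, so $\psi_E\notin H^1(\Gamma)$, and Condition~2 applied both forward in time to $U(s,0)$ and backward via $U(0,s)=U(s)^{-1}$ shows that $U(s)$ preserves membership in $H^1(\Gamma)$ in both directions; hence $U(s)\psi_E\notin H^1(\Gamma)$ as well, i.e., $\|U(s)\psi_E\|_1 = +\infty$. By monotone convergence $\|P_N U(s)\psi_E\|_1^2\uparrow+\infty$ as $N\to\infty$ for every $s$. Since $\|\psi\|\ge 1/2$ forces $\psi_E\ne 0$ for at least one $E$, a further monotone convergence in $s$ then yields $F_N(\psi)\to+\infty$ pointwise on $\tilde K$.

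The main obstacle is turning these two pointwise limits into uniform ones on $\tilde K$. A direct quadratic-form estimate using $\|P_N\chi\|_1\le\sqrt{\lambda_N}\|\chi\|$ gives a Lipschitz bound $|F_N(\psi)-F_N(\psi')|\le C\lambda_N\|\psi-\psi'\|$ and $|G_{N,T}(\phi)-G_{N,T}(\phi')|\le C\lambda_N\|P_p(\phi-\phi')\|$ that is uniform in $T$; this equicontinuity together with pointwise convergence on the compact set $\tilde K$ yields uniform convergence $G_{N,T}\to F_N$ for each fixed $N$ by a standard three-$\varepsilon$ argument. For the uniform divergence $F_N\to\infty$, I would use monotonicity of $F_N$ in $N$: for each $\psi\in\tilde K$ pick $N(\psi)$ with $F_{N(\psi)}(\psi)>3\Omega$, use continuity to find a neighborhood where $F_{N(\psi)}>2\Omega$, extract a finite subcover of $\tilde K$, and let $N_p$ be the maximum of the chosen $N(\psi_i)$; monotonicity then ensures $F_N\ge 2\Omega$ on $\tilde K$ for all $N\ge N_p$. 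Choosing $T_p$ via uniform convergence so that $|G_{N_p,T}-F_{N_p}|<\Omega$ on $\tilde K$ whenever $T\ge T_p$, and invoking monotonicity of $G_{N,T}$ in $N$ once more, delivers $G_{N,T}(\phi)\ge\Omega$ for all $\phi\in K_1$, $N\ge N_p$, and $T\ge T_p$, which is exactly the conclusion of the lemma.
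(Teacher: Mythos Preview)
Your argument is correct and shares the paper's core mechanism: decompose $P_p\phi$ over the eigenspaces of $V$, use periodicity to see that the Ces\`aro average in $n$ kills the off-diagonal pairings, observe that each surviving diagonal term $\|P_NU(s)\psi_E\|_1^2$ diverges because Condition~2 forces $U(s)\psi_E\notin H^1(\Gamma)$ whenever $\psi_E\neq 0$, and finally use compactness of $K$ for uniformity. The organization differs, though. The paper first freezes $t\in[0,1]$ and proves an auxiliary lemma (Lemma~\ref{auxpoint}) giving the bound for the discrete average $\tfrac{1}{D+1}\sum_n\|P_NU(t)V^{n-1}P_p\phi\|_1^2$ at each fixed $t$, deferring the compactness step to Lemma~3.3 of \cite{CKRZ}; it then combines over $t$ by the somewhat ad hoc device of choosing a set $E\subset[0,1]$ of measure $1/2$ on which the $t$-dependent thresholds $N_p(t),D_p(t)$ happen to be bounded. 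Your route is tidier: you identify the full limit $F_N(\psi)=\int_0^1\sum_E\|P_NU(s)\psi_E\|_1^2\,ds$ in one step via dominated convergence in $s$, and you make the uniformity explicit through the Lipschitz bound $|G_{N,T}(\phi)-G_{N,T}(\phi')|\le C\lambda_N\|\phi-\phi'\|$ together with Dini-type monotonicity in $N$. This buys a self-contained proof that avoids both the external reference and the measure-$1/2$ trick; the paper's version buys modularity, essentially reusing the time-independent lemma from \cite{CKRZ} verbatim after the substitution $Q_j\mapsto U(t)Q_j$.
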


\begin{proof}
Notice that with $D=\lfloor T\rfloor$,
\begin{equation} \label{2.2}
\frac{1}{T}\int_0^T\|P_NU(t)P_p\phi\|_1^2dt\geq \int_0^1
\frac{1}{D+1}\sum\limits_{n=1}^{D} \|P_N
U(t)V^{n-1}P_p\phi\|_1^2\,dt.
\end{equation}
The proof will now follow from

\begin{lemma}\label{auxpoint}
For any fixed $t\in[0,1]$ there are $N_p(t,\Omega,K)$ and
$D_p(t,\Omega,K)$ such that for any $N\geq N_p(t,\Omega,K)$, any
$D\geq D_p(t,\Omega,K)$, and any $\phi\in K_1$ we have
\begin{equation} \label{2.3}
\frac{1}{D+1}\sum\limits_{n=1}^{D}\|P_N U(t)V^{n-1}P_p\phi\|_1^2\geq
2\Omega.
\end{equation}
\end{lemma}

\begin{proof}
Denote by $e^{iE_j}$ the eigenvalues of $V$ (distinct, without
repetitions) and by $Q_j$ the orthogonal projection on the space
spanned by the eigenfunctions corresponding to $e^{iE_j}$. Then
\eqref{2.3} can be rewritten as
\[
\sum_{j,l} \frac{e^{i(E_j-E_l)D}-1}{(e^{i(E_j-E_l)}-1)(D+1)}
\langle \Gamma P_N U(t)Q_j\phi, P_N U(t)Q_l\phi \rangle \geq
2\Omega
\]
with the fraction equal to $D/(D+1)$ when $j=l$. The rest of the
proof is identical to that of Lemma 3.3 from \cite{CKRZ} with $Q_j$
replaced by $U(t)Q_j$ and integrals replaced by sums, provided we
can show that $U(t)Q_j\phi\notin H^1(\Gamma)$ whenever $Q_j\phi\neq
0$. But this is true because if $U(t)Q_j\phi\in H^1(\Gamma)$, then
$VQ_j\phi = U(1,t)U(t)Q_j\phi\in H^1(\Gamma)$ by Condition 2, which
is a contradiction with the assumption that $V$ has no
eigenfunctions in $H^1(\Gamma)$ (unless $Q_j\phi=0$).
\end{proof}

Using \eqref{2.2} and \eqref{2.3}, it is now easy to finish the
proof of Lemma \ref{point}. Indeed, one only needs to choose
$N_p(\Omega,K)$ and $T_p(\Omega,K)$ to be larger than
$N_p(t,\Omega,K)$ and $D_p(t,\Omega,K)$ for all $t\in E$ with
$E\subset[0,1]$ some set of measure $1/2$. This is possible because
$N_p(t,\Omega,K)$ and $D_p(t,\Omega,K)$ are finite for each $t$.
\end{proof}

We can now proceed with the proof of Theorem \ref{main}. Recall that
we assume that $V$ has no eigenfunctions in $H^1(\Gamma)$. Given
$\tau,\delta>0$, we choose $M$ large enough, so that
$e^{-\lambda_M\tau/160}<\delta$. Define the sets $K\equiv \{\phi\in
S:\ \|\phi\|_1^2\leq B_1^2\lambda_M\}\subset S$ and as before,
$K_1\equiv \{\phi\in K:\ \|P_p\phi\|\geq 1/2\}$ (recall that $B_1$
is from \eqref{2.4}). Choose $N$ so that $N\geq M$ and $N\geq
N_p(5\lambda_M,K)$ from Lemma~\ref{point}. Define
$$
\tau_1\equiv
\max\{T_p(5\lambda_M,K),T_c(N,\frac{\lambda_M}{20\lambda_N}),1\},
$$
where $T_p$ is from Lemma~\ref{point} and $T_c$ from Corollary~\ref{scont}. Finally, choose $\epsilon_0>0$
so that $\tau_1+1<\tau/2\epsilon_0$, and
\[
\epsilon_0 \int_0^{\tau_1+1} B(t)^2 dt\leq\frac{1}{20\lambda_N},
%\frac{\delta^2}{20\lambda_N},
%\epsilon_0\sup\limits_{s\in[0,1]}\int_0^{\tau_1}B^2(t,s)dt\leq
%\frac{\delta^2}{20\lambda_N},
%\ \ \
%\epsilon_0\leq\frac{1}{4C_1\lambda_M},
\]
where $B(t)$ is from Condition 2.

Take any $\epsilon<\epsilon_0$. If we have $\|\phi^\epsilon
(s)\|_1^2\geq\lambda_M\|\phi^\epsilon (s)\|^2$ for all
$s\in[0,\tau/2\epsilon]$ then Lemma~\ref{large} implies that
$\|\phi^\epsilon (\tau/2\epsilon)\|^2\leq
e^{-\lambda_M\tau}\leq\delta$ by the choice of $M$ and we are done.
Otherwise, let $\tau_0$ be the first time in the interval
$[0,\tau/2\epsilon]$ such that $\|\phi^\epsilon
(\tau_0)\|_1^2\leq\lambda_M\|\phi^\epsilon (\tau_0)\|^2$. We now let
$\phi^0(t)\equiv U(t,\tau_0)\phi^\eps(\tau_0)$ solve \eqref{eq} with
initial condition $\phi^0(\tau_0)=\phi^\epsilon (\tau_0)$.
Lemma~\ref{sdist} then gives
\begin{equation}\label{impdist}
\|\phi^\epsilon(t)-\phi^0(t)\|^2\leq\frac{\lambda_M}{40\lambda_N}\|\phi^\epsilon(\tau_0)\|^2
\end{equation}
for all $t\in[\tau_0,\lceil\tau_0\rceil+\tau_1]$. We also have
\[
\|\phi^0 (\lceil \tau_0\rceil)\|_1^2\leq
B_1^2\lambda_M\|\phi^\epsilon (\tau_0)\|^2 =  B_1^2\lambda_M\|\phi^0
(\lceil \tau_0\rceil)\|^2
\]
by Condition 2 and so $\phi^0 (\lceil \tau_0\rceil)/\|\phi^0 (\lceil
\tau_0\rceil)\|\in K$. We now claim that the following estimate
holds:
\begin{equation}\label{important}
\|\phi^\epsilon (\lceil\tau_0\rceil+\tau_1)\|^2\leq
e^{-\lambda_M\epsilon\tau_1/20}\|\phi^\epsilon (\tau_0)\|^2.
\end{equation}
%First, note that by Lemma~\ref{perturb} we have $\|\phi^\epsilon (\lceil\tau_0\rceil)\|_1^2\leq 2e^{2C_0}\lambda_M\|\phi^\epsilon
%(\lceil\tau_0\rceil)\|^2$.
Indeed, given our choice of $\tau_1$, Corollary \ref{scont}, Lemma
\ref{point}, \eqref{impdist}, and
$U(t+\lceil\tau_0\rceil,\lceil\tau_0\rceil)=U(t)$, the proof is the
same as that of the almost identical estimate (3.8) in \cite{CKRZ}
(which has $\tau_0$ in place of $\lceil\tau_0\rceil$). Then we have
\begin{equation}\label{five}
\|\phi^\epsilon(\tau_0+\tau_1+1)\|^2\leq\|\phi^\epsilon(\lceil\tau_0\rceil+\tau_1)\|^2
\leq e^{-\lambda_M\epsilon\tau_1/20}\|\phi^\epsilon(\tau_0)\|^2 \leq
e^{-\lambda_M\epsilon(\tau_1+1)/40}\|\phi^\epsilon(\tau_0)\|^2,
\end{equation}
where we used \eqref{equality} in the first inequality. The same
method can be repeated with $\tau_0$ replaced by the first time
after $\tau_0+\tau_1+1$ at which $\|\phi^\epsilon
(t)\|_1^2\leq\lambda_M\|\phi^\epsilon (t)\|^2$, etc. On the other
hand, for any interval $I=[a,b]$ such that
$\|\phi^\epsilon(t)\|^2_1\geq\lambda_M\|\phi^\epsilon(t)\|^2$ on
$I$, we have by Lemma~\ref{large} that
\begin{equation}\label{six}
\|\phi^\epsilon(b)\|^2\leq e^{-2\lambda_M\epsilon
(b-a)}\|\phi^\epsilon(a)\|^2.
\end{equation}
Combining all the decay factors gained from \eqref{five} and
\eqref{six}, and using $\tau_1+1<\tau/2\epsilon$, we find that there
exists $\tau_2\in[\tau/2\epsilon,\tau/\epsilon]$ such that
\[
\|\phi^\epsilon(\tau_2)\|^2\leq e^{-\lambda_M\epsilon\tau_2/40}\leq
e^{-\lambda_M\tau/80}<\delta^2
\]
by our choice of $M$. Then \eqref{equality} gives
$\|\phi^\epsilon(\tau/\epsilon)\|\leq\|\phi^\epsilon(\tau_2)\|<\delta$,
thus finishing the proof of Theorem~\ref{main}.
\end{proof}

%%%%%%%%%%%%%%%%%%%%%%%%%%%%%%%%%%%%%%%%%%%%%%%%%%%%%%%%%%%%%%%%%%%%
\section{Relaxation for the porous medium equation}\label{poroussec}
%%%%%%%%%%%%%%%%%%%%%%%%%%%%%%%%%%%%%%%%%%%%%%%%%%%%%%%%%%%%%%%%%%%%

In this section, we indicate how to generalize our results on
relaxation enhancement to some nonlinear equations. The arguments of
the previous section and \cite{CKRZ} are sufficiently robust to
remain applicable in this more general setting. Here we focus on the
case of the {\it porous medium equation with advection}
\begin{equation}\label{pm}
\frac{d}{dt}\phi^A(x,t)+Au(x,At)\cdot
\nabla\phi^A(x,t)=\Delta(\phi^A(x,t))^q,\qquad \phi^A(x,0)=\phi_0(x),
\end{equation}
with $q>1$ and on a smooth compact Riemannian manifold $M$ without
boundary. We restrict our considerations to initial data $\phi_0$
which are positive and bounded: $0<h \leq \phi_0(x) \leq h^{-1}.$
This is the physically relevant case, and such choice of data also
ensures uniform parabolicity. We refer to \cite{Vaz} (which mainly
concentrates on \eqref{pm} without the advection term) for the
overview of history, basic properties, and applications of the
equation \eqref{pm}. In particular, a unique classical solution to
\eqref{pm} exists under our assumptions on the initial data provided
$u\in C^\infty(M\times\bbR)$ (see \cite[Section 3.1]{Vaz} and
references therein).
%Even though the flow term is not considered in
%\cite{Vaz}, in particular in the section on existence and properties
%of solutions to porous medium equation on manifolds, it can be
%incorporated in the arguments of \cite{Vaz} in a routine way.

We again define relaxation-enhancing flows via Definition
\ref{relaxdef} but this time with the initial data also satisfying $h\le \phi_0\le
h^{-1}$ for some $h>0$, and $A_0$ can additionally depend on $h$.
Notice that the mean $\bar\phi=\bar\phi_0$ of the solution is again
preserved by the evolution \eqref{pm}. We now have

\begin{theorem}\label{porous}
Let $M$ be a smooth compact Riemannian manifold. Consider equation
\eqref{pm} with real-valued positive initial data bounded away from
0 and $\infty$. A time p-periodic incompressible flow $u \in
C^\infty(M\times\bbR)$ is relaxation enhancing for \eqref{pm} if and
only if the period operator $U(p)$ has no eigenfunctions in $H^1(M)$
other than the constant function.
\end{theorem}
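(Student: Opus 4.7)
The strategy is to reduce Theorem~\ref{porous} to an adaptation of the proof of Theorem~\ref{main}, exploiting uniform parabolicity. The maximum principle for \eqref{pm} with $0 < h \le \phi_0 \le h^{-1}$ preserves these bounds, so $q(\phi^A)^{q-1}$ lies between positive constants $c_-(h,q)$ and $c_+(h,q)$. After rescaling $\eps = A^{-1}$ and writing $\psi := \phi^\eps - \bar\phi_0$, incompressibility $\nabla\cdot u = 0$ kills the transport term in the $L^2$ identity, leaving
\[
\tfrac{d}{dt}\|\psi\|^2 \;=\; -\tfrac{8q\eps}{(q+1)^2}\|\nabla(\phi^\eps)^{(q+1)/2}\|^2 \;\le\; -2qh^{q-1}\eps\,\|\nabla\psi\|^2,
\]
so that the uniform dissipation bound $\eps\int_0^\infty\|\nabla\psi\|^2\,dt \le (2qh^{q-1})^{-1}\|\phi_0 - \bar\phi_0\|^2$ is available throughout.

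For the ``if'' direction, each of the three structural lemmas of Section~2 transfers with essentially the same proof. Lemma~\ref{large} goes through with $2\eps N$ replaced by $2qh^{q-1}\eps N$. For the comparison Lemma~\ref{sdist}, multiplying the equation for $\phi^\eps - \phi^0$ (where $\phi^0(t) = U(t,\tau_0)\phi^\eps(\tau_0)$) by $\phi^\eps - \phi^0$, integrating by parts in the dissipation term, and absorbing the cross term $2\eps q\int (\phi^\eps)^{q-1}\nabla\phi^0\cdot\nabla\phi^\eps$ via AM--GM yields
\[
\|\phi^\eps(\tau_0+\tau) - \phi^0(\tau_0+\tau)\|^2 \;\le\; qh^{-(q-1)}\eps\,\|\nabla\phi^\eps(\tau_0)\|^2\int_0^\tau B(s)^2\,ds.
\]
The dynamical-systems lemmas (Lemma~\ref{cont}, Corollary~\ref{scont}, Lemma~\ref{point}) concern only the unitary advection group $U(t)$ and the spectral projections of $V$ and of $-\Delta$, and hence transfer verbatim. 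The iterative argument of Section~2, alternating phases of large and small $\|\nabla\psi\|$, then closes the proof with all constants rescaled by factors depending only on $h$ and $q$.

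For the ``only if'' direction, suppose $V\psi = e^{iE}\psi$ with $\psi \in H^1(M)$ non-constant, $\bar\psi = 0$, $\|\psi\| = 1$. Pick a bounded $H^1$ truncation $\tilde\psi$ of $\psi$ with $\bar{\tilde\psi} = 0$ and $\langle \tilde\psi,\psi\rangle \ge 3/4$, then choose $c,\delta_0 > 0$ small enough so that $\phi_0 := c + \delta_0\tilde\psi$ satisfies $h \le \phi_0 \le h^{-1}$ and $\|\phi_0\| = 1$. Track the overlap
\[
F(t) \;:=\; \langle \phi^\eps(t) - c,\, U(t)\psi\rangle.
\]
Incompressibility again cancels the transport contributions, and integration by parts in the dissipation term combined with $(\phi^\eps)^{q-1} \le h^{-(q-1)}$ gives $|F'(t)| \le \tfrac{\eps q h^{-(q-1)}}{2}\bigl(\|\nabla\phi^\eps\|^2 + \|U(t)\psi\|_1^2\bigr)$. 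The first term integrates to $O(1)$ by the energy estimate; the second is controlled because $V\psi = e^{iE}\psi$ makes $\|U(t)\psi\|_1$ a $p$-periodic, hence uniformly bounded, function of $t$, so $\eps\int_0^{\tau/\eps}\|U(t)\psi\|_1^2\,dt = O(\tau)$. For $\tau$ small, $|F(\tau/\eps)| \ge F(0)/2 \ge 3\delta_0/8$, and Cauchy--Schwarz with $\|U(t)\psi\| = 1$ forces $\|\phi^\eps(\tau/\eps) - c\| \ge 3\delta_0/8$, contradicting relaxation enhancement.

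The main technical obstacle is the truncation in the ``only if'' direction: $H^1$ eigenfunctions of $V$ need not lie in $L^\infty$ on higher-dimensional manifolds, and a naive pointwise truncation of $\psi$ fails to be an eigenfunction whenever $E \ne 0$. We resolve this by perturbing only with the bounded $H^1$ truncation $\tilde\psi$ in the initial data while retaining the genuine (possibly unbounded) eigenfunction $\psi$ as the overlap test function, since only the $H^1$ regularity of $\psi$ is needed for the dissipation-based estimate on $F'$ to close.
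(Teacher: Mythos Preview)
Your proof is correct. The ``if'' direction matches the paper almost exactly: the paper also singles out Lemma~\ref{sdist} as the only place needing adjustment and derives the same bound $\tfrac{d}{dt}\|\phi^\eps-\phi^0\|^2 \le \tfrac{\eps q h^{1-q}}{2}B(t-\tau_0)^2\|\nabla\phi_0\|^2$, after which the iterative argument runs unchanged with $h$-dependent constants.

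The ``only if'' direction, however, takes a genuinely different route. The paper exploits a structural feature you overlooked: $U(p)$ is the Koopman operator of the measure-preserving time-$p$ map, i.e., a composition operator. If $\psi\circ T = e^{iE}\psi$, then the \emph{radial} truncation $f(\psi)$ with $f(z)=z\min\{1,M/|z|\}$ (a Lipschitz map $\bbC\to\bbC$ that commutes with rotations) satisfies $f(\psi)\circ T = f(e^{iE}\psi)=e^{iE}f(\psi)$, so it is again a bounded $H^1$ eigenfunction. The paper then takes $\phi_0:=m(\Re\psi+2M)$ and pairs $\phi^\eps-\bar\phi_0$ against the transported eigenfunction $\phi^0(t)-\bar\phi_0$, exactly as in the linear case. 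Your assertion that ``a naive pointwise truncation of $\psi$ fails to be an eigenfunction whenever $E\ne 0$'' is true for separate truncation of real and imaginary parts, but not for radial truncation.

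Your workaround --- perturbing the initial datum by a bounded non-eigenfunction $\tilde\psi$ while testing against the genuine (possibly unbounded) eigenfunction $U(t)\psi$ --- is also valid, and is arguably more robust: it would survive in an abstract setting where $V$ is not a composition operator and no eigenfunction-preserving truncation is available. The cost is a bit more bookkeeping (two objects rather than one, plus the separate verification that $F(0)\ge 3\delta_0/4$). The paper's approach is cleaner for this particular problem; yours is more general.
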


{\it Remarks.} 1. The same result also holds in the case of the {\it
generalized porous medium equation with advection}
\begin{equation}\label{gpm}
\frac{d}{dt}\phi^A(x,t)+Au(x,At)\cdot
\nabla\phi^A(x,t)=\Delta\Psi(\phi^A(x,t)),\qquad
\phi^A(x,0)=\phi_0(x),
\end{equation}
where $\Psi$ is any smooth increasing function with $\Psi(0)=0$ and
$\Psi'$ bounded away from zero on each interval $[h,h^{-1}]$, $h>0$.
\smallskip

2. Similarly to Theorem~\ref{fluid}, this theorem can be stated in a
more abstract form. We do not pursue this more general formulation
here since it requires a number of technical assumptions. The role
of the $H^1(\Gamma)$-norm is then typically played by an expression
derived from the nonlinear term. In our case this expression is
$\int \psi^{q-1}|\nabla \psi|^2\,dx$ which is equivalent to the
$H^1(\Gamma)$-norm for all $h\le\psi\le h^{-1}$ (and $h\le
\phi^\eps(t)\le h^{-1}$ is guaranteed by $h\le \phi_0\le h^{-1}$ and
the maximum principle).
\smallskip

\begin{proof}
Most of the proof is parallel to that of Theorem~\ref{fluid} and
Theorem~\ref{main}, so we just indicate the necessary changes. Let
us switch to the equivalent small-diffusion formulation
\begin{equation}\label{pm1}
\frac{d}{dt}\phi^\epsilon(x,t)+u(x,t)\cdot
\nabla\phi^\epsilon(x,t)=\epsilon\Delta(\phi^\epsilon(x,t))^q,\qquad
\phi^\epsilon(x,0)=\phi_0(x).
\end{equation}

If $\psi$ is a nonconstant $H^1$ eigenfunction of $U(p)$, assume that $\psi$ is
bounded by $M<\infty$ (otherwise consider ${\rm
arg}(\psi)\min\{|\psi|,M\}$ instead, which is an $H^1$ eigenfunction
of $U(p)$ with the same eigenvalue). Without loss of generality
assume $\Re \psi\neq 0$ and define $\phi_0\equiv m(\Re\psi+2M)$
where $m>0$ is such that $\|\phi_0-\bar\phi_0\|=1$. Now $h\le
\phi_0\le h^{-1}$ for some $h>0$, and let $\phi^0(t)$ and
$\phi^\epsilon(t)$ solve \eqref{pm1}. It is easy to see that
$\phi^0(t)=m(\Re \psi^0(t)+2M)\ge h$, where $\psi^0(t)$ solves
\eqref{pm1} with $\eps=0$ and initial condition $\psi$. As a result
we have
\[
\|\nabla\phi^0(t)\|\le m \|\nabla\psi^0(t)\|\le mB_1
\|\nabla\psi^0(\lfloor t\rfloor)\| = mB_1 \|\nabla\psi\|.
\]
Instead of \eqref{imp} in the proof of Theorem~\ref{main} we now
obtain
\[ \left| \frac{d}{dt} \langle \phi^\epsilon(t)-\bar\phi_0, \phi^0(t)-\bar\phi_0 \rangle
\right| \leq \frac{\epsilon q}{2}\left( \int_M (\phi^\epsilon)^{q-1}
|\nabla \phi^\epsilon |^2\,dx + \int_M (\phi^\epsilon)^{q-1} |\nabla
\phi^0 |^2\,dx \right).
\]
Similarly to \eqref{imp1}, we have
\[
2\epsilon q \int_0^\infty \int_M (\phi^\epsilon)^{q-1} |\nabla
\phi^\epsilon |^2\,dxdt \leq 1.
\]
Since $|\phi^\epsilon| \leq h^{-1},$ \eqref{imp2} carries over
without changes and we obtain
\[
|\langle \phi^\epsilon(\tau/\epsilon) -\bar\phi_0, \phi^0
(\tau/\epsilon) -\bar\phi_0 \rangle| \geq 1 -\frac14 - \frac{\tau
q}{2}h^{1-q} m^2B_1^2\|\nabla\psi\|^2,
\]
from which lack of relaxation enhancement follows.

The only argument in the proof of the opposite implication that
requires a slight adjustment is  Lemma~\ref{sdist}, where we now
have
\begin{align*}
\frac{d}{dt}\|\phi^\epsilon(t) - & \phi^0(t)\|^2  \leq 2\epsilon
\int_M \Delta (\phi^\epsilon)^q (\phi^\epsilon - \phi^0)\,dx  \\
& \le 2\epsilon q \left( \int_M (\phi^\epsilon)^{q-1}
|\nabla\phi^\epsilon|^2 dx \right)^{1/2} \left( \int_M
(\phi^\epsilon)^{q-1} |\nabla\phi^0|^2 dx \right)^{1/2} - 2\epsilon
q \int_M (\phi^\epsilon)^{q-1} |\nabla\phi^\epsilon|^2 dx \\
& \le \frac{\epsilon q}{2} \int_M (\phi^\epsilon)^{q-1}
|\nabla\phi^0|^2 dx \\
& \leq \frac{\epsilon q h^{1-q}}{2} B(t-\tau_0)^2\|\nabla\phi_0\|^2.
\end{align*}
The rest of the proof involves only estimates on the linear dynamics
with $\eps=0$. Thus all the bounds on the $H^1(\Gamma)$-norm from the proof
of Theorem~\ref{main} translate immediately into estimates on the
decay rate for $\|\phi^\eps-\bar\phi_0\|$, with possibly $h$-dependent
constants.
\end{proof}

%%%%%%%%%%%%%%%%%%%%%%%%%%%%%%%%%%%%%%%%%%%%%%%%%%%%%%%%%%%%%%%%%%%
\begin{comment}

Finally, notice that for the porous medium equation
\[ \frac{d}{dt}\|\phi^\epsilon(t)\|^2 = -2\epsilon
\int (\phi^\epsilon(x,t))^{q-1} |\nabla\phi^\epsilon(x,t)|^2\,dx,
\] while under our assumptions on the initial data and by the
comparison principle
\[ \int (\phi^\epsilon(x,t))^{q-1} |\nabla\phi^\epsilon(x,t)|^2\,dx
\geq h^{q-1} \|\phi^\epsilon\|^2_1. \] Given the above bounds, the
rest of the proof proceeds in the same way as in Theorem~\ref{main}
perhaps with a minor change in universal constants.

\end{comment}
%%%%%%%%%%%%%%%%%%%%%%%%%%%%%%%%%%%%%%%%%%%%%%%%%%%%%%%%%%%%%%%%%%%


\begin{thebibliography}{99}

\bibitem{BHN} H.~Berestycki, F.~Hamel and N.~Nadirashvili, \it Elliptic eigenvalue
problems with large drift and applications to nonlinear
propagation phenomena, \rm Commun. Pure Appl. Math. {\bf 253},
2005, 451--480

\bibitem{CKRZ} P.~Constantin, A.~Kiselev, L.~Ryzhik, A.~Zlato\v s, {\it
Diffusion and mixing in fluid flow}, arXiv:math/0509663, to appear
in Ann. of Math. (2).

\bibitem{CFKS} H.~Cycon, F.~Froese, W.~Kirsch, B.~Simon, {\it
Schr\"odinger Operators}, Springer-Verlag, 1987.

\bibitem{EN} K.~J.~Engel and R.~Nagel, {\it One-Parameter Semigroups for
Linear Evolution Equations}, Springer-Verlag, New York, 2000.

\bibitem{Ev} L.~C.~Evans, {\it Partial Differential Equations}, Graduate
Studies in Mathematics, Vol. {\bf 19}, AMS, Providence, 1998.

\bibitem{Fayad1} B.~Fayad, \it Weak mixing for reparameterized
linear flows on the torus, \rm Ergodic Theory Dynam. Systems {\bf
22}, 2002, 187--201.

\bibitem{Fayad2} B.~Fayad,  \it Mixing diffeomorphisms and flows
with purely singular spectra, \rm preprint.

%\bibitem{Ho} J.~S.~Howland, {\it Scattering theory for Hamiltonians periodic in time},
%Indiana Univ. Math. J., Vol. {\bf 28}, no. 3 (1979), 471--494.

\bibitem{Vaz} J.L.~V\' azquez, {\it The Porous Medium Equation},
Clarendon Press, Oxford, 2007.

\end{thebibliography}
\end{document}